\newcommand{\reals}{\mathbb{R}}
\newcommand{\complex}{\mathbb{C}}
\newcommand{\angles}[1]{\left\langle #1 \right\rangle}
\newcommand{\paraa}[1]{\big(#1\big)}
\newcommand{\parab}[1]{\Big(#1\Big)}
\newcommand{\spacearound}[1]{\quad#1\quad}
\newcommand{\equivalent}{\spacearound{\Leftrightarrow}}
\renewcommand{\implies}{\spacearound{\Rightarrow}}
\newtheorem{theorem}{Theorem}[section]
\newtheorem{lemma}[theorem]{Lemma}
\newtheorem{proposition}[theorem]{Proposition}
\theoremstyle{definition}
\theoremstyle{remark}
\numberwithin{equation}{section}
\newcommand{\A}{\mathcal{A}}
\newcommand{\N}{\mathcal{N}}
\renewcommand{\P}{\mathcal{P}}
\newcommand{\gb}{\,\bar{\!g}}
\renewcommand{\d}{\partial}
\newcommand{\eps}{\varepsilon}
\newcommand{\nablab}{\bar{\nabla}}
\newcommand{\vphi}{\varphi}
\renewcommand{\mid}{\mathds{1}}
\newcommand{\D}{\mathcal{D}}
\newcommand{\Der}{\operatorname{Der}}
\renewcommand{\div}{\operatorname{div}}
\newcommand{\TA}{T\mathcal{A}}
\newcommand{\End}{\operatorname{End}}
\newcommand{\Rt}{\tilde{R}}
\newcommand{\alphab}{\bar{\alpha}}
\newcommand{\betab}{\bar{\beta}}
\newcommand{\gammab}{\bar{\gamma}}
\newcommand{\xv}{\vec{x}}
\newcommand{\qb}{\bar{q}}
\newcommand{\rank}{\operatorname{rank}}
\title[]{Curvature and geometric modules of noncommutative
  spheres and tori}
\author{Joakim Arnlind}
\address[Joakim Arnlind]{Dept. of Math.\\
Link\"oping University\\
581 83 Link\"oping\\
Sweden}
\email{joakim.arnlind@liu.se}
\thanks{}
\subjclass[2000]{}
\keywords{}
\begin{document}

\begin{abstract}
  When considered as submanifolds of Euclidean space, the Riemannian
  geometry of the round sphere and the Clifford torus may be
  formulated in terms of Poisson algebraic expressions involving the
  embedding coordinates, and a central object is the projection
  operator, projecting tangent vectors in the ambient space onto the
  tangent space of the submanifold.  In this note, we point out that
  there exist noncommutative analogues of these projection operators,
  which implies a very natural definition of noncommutative tangent
  spaces as particular projective modules. These modules carry an
  induced connection from Euclidean space, and we compute its scalar
  curvature.
\end{abstract}

\maketitle

\section{Introduction}

\noindent Linear connections on modules over noncommutative algebras,
and associated differential calculi have been studied from many
different points of view (see
e.g. \cite{c:cstargeometrie,dv:derivationsNonCommutatif,m:linearConnections}
for a derivation based approach). In most cases the definition of the
curvature operator is immediately given as the failure of the
connection to be commutative, in analogy with classical differential
geometry. However, the Ricci and scalar curvature does not come as
easily. In commutative geometry, they arise as contractions over a
basis of the tangent space, which does not always have an apparent
noncommutative analogue (however, see
\cite{cff:gravityncgeometry,mmm:linearConnectionsMatrix,ctzz:riemannianNoncommutativeSurfaces,r:leviCivitaTori}). There
are also more sophisticated definitions relying on the appearance of
the scalar curvature in the expansion of the heat kernel (see
e.g. \cite{cm:modularcurvature}).

In a series of papers
(\cite{ahh:multilinear,ahh:geometryNambu,ahh:psurface}) it was proven
that one may formulate the metric geometry of embedded manifolds in
terms of multi-linear algebraic expressions in the embedding coordinates. For
surfaces, and, in general, almost K\"ahler manifolds, a Poisson
algebraic formulation exists \cite{ah:kahlerpoisson} (see also
\cite{bs:curvatureMatrix}). These results were then used to construct
noncommutative geometric concepts (such as curvature) by simply
replacing Poisson brackets by commutators, and, in the context of
matrix regularizations, these concepts were proven to be useful
\cite{ahh:multilinear}. However, matrix regularizations
rely on a sequence of algebras converging (in a certain sense) to the
commutative algebra of smooth functions on the manifold, and therefore
it was not clear how well adapted these concepts are to a single
noncommutative algebra.

In this note, we will show that the projector of classical geometry,
projecting tangent vectors from the ambient space to the tangent space
of the embedded manifold, has a natural analogue in the noncommutative
algebras of the sphere and the torus. This allows for the definition
of a projective module which one may call the tangent bundle of the
corresponding noncommutative geometry.  Furthermore, an analogue of
the Riemannian connection can be found and the corresponding scalar
curvatures are computed. Note that our approach is in principle not
limited to surfaces, and can be applied to noncommutative algebras
corresponding to submanifolds of any dimension.

\section{Poisson algebraic formulation of surface geometry}

\noindent In \cite{ahh:multilinear} it was shown that the geometry of
embedded Riemannian manifolds can be reformulated in terms of multi-linear
brackets of the embedding coordinates; moreover, in the case of almost
K\"ahler manifolds, a Poisson bracket formulation can be obtained
\cite{ah:kahlerpoisson}. Let us recall the basic facts of this
reformulation, in the case of embedded surfaces.

Let $(\Sigma,g)$ be a $2$-dimensional Riemannian manifold, and let
$\theta$ be a Poisson bivector defining the bracket
\begin{align*}
  \{f,h\} = \theta^{ab}\paraa{\d_af}\paraa{\d_b h}.
\end{align*}
for $f,h\in C^\infty(\Sigma)$. On a 2-dimensional manifold, every
Poisson bivector is of the form $\theta^{ab}=\eps^{ab}/\rho$ for some
density $\rho$ (where $\eps^{12}=-\eps^{21}=1$). The cofactor
expansion of the inverse of a matrix gives the following way of
writing the inverse of the metric
\begin{equation}\label{eq:gthetarel}
  g^{ab} = \frac{1}{g}\eps^{ap}\eps^{bq}g_{ab}\quad\Longrightarrow\quad
  g^{ab} = \frac{\rho^2}{g}\theta^{ap}\theta^{bq}g_{ab},
\end{equation}
which, upon setting $\gamma=\sqrt{g}/\rho$, becomes 
$\gamma^2g^{ab} = \theta^{ap}\theta^{bq}g_{ab}$.

Now, assume that $\Sigma$ is isometrically embedded in a
$m$-dimensional Riemannian manifold $(M,\gb)$, via the embedding
functions $x^1,\ldots,x^m$; i.e.
\begin{align*}
  g_{ab} = \gb_{ij}\paraa{\d_ax^i}\paraa{\d_bx^j}
\end{align*}
where $\d_a = \frac{\d}{\d u^a}$. (Indices $i,j,k,\ldots$ run from $1$
to $m$ and indices $a,b,c,\ldots$ run from $1$ to $2$.) Relation
\eqref{eq:gthetarel} allows one to rewrite geometric object in terms
of Poisson brackets of the embedding functions $x^1,\ldots,x^m$. For
instance, one notes that by defining $\D:T_pM\to T_pM$ as
\begin{align*}
  &{\D^i}_j=\frac{1}{\gamma^2}\{x^i,x^k\}\gb_{kl}\{x^j,x^l\}\gb_{jm}\\
  &\D(X) \equiv {\D^i}_jX^j\d_i
\end{align*}
for $X=X^i\d_i\in T_pM$, one computes
\begin{align*}
  \D(X)^i &= \frac{1}{\gamma^2}\theta^{ab}(\d_ax^i)(\d_bx^k)\gb_{kl}
  \theta^{pq}(\d_px^j)(\d_qx^l)\gb_{jm}X^m\\
  &=\frac{1}{\gamma^2}\theta^{ab}\theta^{pq}g_{bq}(\d_ax^i)(\d_px^j)\gb_{jm}X^m
  =g^{ap}(\d_ax^i)(\d_px^j)\gb_{jm}X^m,
\end{align*}
by using \eqref{eq:gthetarel}. Hence, the map $\D$ is identified as the
orthogonal projection onto $T_p\Sigma$, seen as a subspace of
$T_pM$ and, for convenience, we also introduce the complementary projection as
\begin{equation*}
  \Pi = \mid -\D.
\end{equation*}
Having the projection operator at hand, one may proceed to
develop the theory of submanifolds. For instance, the Levi-Civita
connection $\nabla$ on $\Sigma$ is given by
\begin{align*}
  \nabla_XY = \D\paraa{\nablab_XY}
\end{align*}
where $X,Y\in T_p\Sigma$ and $\nablab$ is the Levi-Civita connection
on $M$. Let us now turn to the particular case we shall be interested
in. Namely, we assume that $(M,\gb_{ij})=(\reals^m,\delta_{ij})$
(which one may always do) and choose $\gamma=1$
(i.e. $\theta^{ab}=\eps^{ab}/\sqrt{g}$). In this setting, the
connection becomes
\begin{align*}
  \nabla_XY^i = {\D^i}_kX(Y^k) 
\end{align*}
where $X(f)$ denotes the action of $X\in T_p\Sigma$ on $f\in
C^\infty(\Sigma)$ as a derivation; as usual, one introduces the
curvature operator as
\begin{align*}
  R(X,Y)Z = \nabla_X\nabla_YZ-\nabla_Y\nabla_XZ-\nabla_{[X,Y]}Z.
\end{align*}
In the non-commutative setting, we shall be interested in a particular
set of derivations; namely, let
\begin{align*}
  \d^i(\cdot) = \{x^i,\cdot\} = \{x^i,x^j\}\d_j(\cdot)
\end{align*}
and set $\nabla^iY^k = \nabla_{\d^i}Y^k = {\D^k}_l\d^i(Y^l)$. With respect to this set of
derivations, one introduces the operator
\begin{align*}
  &\Rt^{ij}(Z) = \nabla^i\nabla^jZ-\nabla^j\nabla^iZ-\nabla_{[\d^i,\d^j]}Z\\
  &\Rt(X,Y)Z = X^iY^i\Rt_{ij}(Z) 
\end{align*}
and computes that 
\begin{align*}
  \Rt^{ij}(Z)^k &= \d^i\paraa{{\D^k}_m}\d^j\paraa{{\D^m}_l}Z^l
  -\d^j\paraa{{\D^k}_m}\d^i\paraa{{\D^m}_l}Z^l\\
  &\equiv \Rt {^{ijk}}_lZ^l.
\end{align*}
The relation to the curvature operator $R$ is given by
\begin{align*}
  R(X,Y)Z = \Rt\paraa{\P(X),\P(Y)}Z
\end{align*}
where $\P(X) = {\P^{i}}_jX^j\d_i$ with $\P^{ij} = \{x^i,x^j\}$.  To
compute the scalar curvature $S$, one has to contract indices of
$R_{ijkl}$ with the projection operator $\D^{ij}$, since one is
summing over a basis of $T_p\Sigma$ (seen as a subspace of $T_pM$);
i.e. $S=\D^{jl}\D^{ik}R_{ijkl}$. Subsequently, the scalar curvature is
given in terms of $\Rt$ as
\begin{align}\label{eq:scCommutative}
  S = 
  \P_{jl}\P_{ik}\Rt^{ijkl},
\end{align}
which is a formula we shall use to define scalar curvature in the
non-commutative setting. Let us now recall how the differential
geometry of the sphere and the torus can be described in terms of Poisson
brackets.

\subsection{The sphere}

One considers the sphere as isometrically embedded in $\reals^3$ via
\begin{align*}
  \xv = (\sin\theta\cos\vphi,\sin\theta\sin\vphi,\cos\theta)
\end{align*}
giving
\begin{align*}
  (g_{ab})=
  \begin{pmatrix}
    1 & 0 \\ 0 & \sin^2\theta
  \end{pmatrix}\quad\text{and}\quad
  \sqrt{g} = \sin\theta.
\end{align*}
By defining
\begin{align*}
  \{f,h\} = \frac{1}{\sqrt{g}}\eps^{ab}\paraa{\d_af}\paraa{\d_bh}
  =\frac{1}{\sin\theta}\eps^{ab}\paraa{\d_af}\paraa{\d_bh}
\end{align*}
one obtains
\begin{align*}
  \{x^i,x^j\} = \eps^{ijk}x^k,
\end{align*}
where $\eps^{ijk}$ is a totally antisymmetric tensor with
$\eps^{123}=1$.  It is then straightforward to show that
\begin{align*}
  &\D^{ij} = \{x^i,x^k\}\{x^j,x^k\} = \delta^{ij}-x^ix^j.\\
  &\Pi^{ij} = x^ix^j\\
  &\Rt^{ijkl} = \paraa{\eps^{ikm}\eps^{jln}-\eps^{jkm}\eps^{iln}}x^mx^n \\
  &S=\P^{jl}\P^{ik}\Rt^{ijkl} = 2.
\end{align*}

\subsection{The torus}

\noindent The Clifford torus is considered as embedded in $\reals^4$ via
\begin{align*}
  \xv = \frac{1}{\sqrt{2}}(\cos\vphi_1,\sin\vphi_1,\cos\vphi_2,\sin\vphi_2) 
\end{align*}
giving rise to the induced metric
\begin{align*}
  (g_{ab}) = \frac{1}{2}
  \begin{pmatrix}
    1 & 0 \\ 0 & 1 
  \end{pmatrix}
  \quad\text{with}\quad
  \sqrt{g} = \frac{1}{2}.
\end{align*}
By defining
\begin{align*}
  \{f,h\} = \frac{1}{\sqrt{g}}\eps^{ab}\paraa{\d_af}\paraa{\d_bh}
  =2\eps^{ab}\paraa{\d_af}\paraa{\d_bh}
\end{align*}
one obtains
\begin{align*}
  \paraa{\{x^i,x^j\}} = 2
  \begin{pmatrix}
    0 & 0 & x^2x^4 & -x^2x^3 \\
    0 & 0 & -x^1x^4 & x^1x^3 \\
    -x^2x^4 & x^1x^4 & 0 & 0 \\
    x^2x^3 & -x^1x^3 & 0 & 0
  \end{pmatrix},
\end{align*}
from which it follows that
\begin{align*}
  &\D = 2
  \begin{pmatrix}
    (x^2)^2 & -x^1x^2 & 0 & 0 \\
    -x^1x^2 & (x^1)^2 & 0 & 0 \\
    0 & 0 & (x^4)^2 & -x^3x^4 \\
    0 & 0 & -x^3x^4 & (x^3)^2
  \end{pmatrix}\\
  &\Pi = 2
  \begin{pmatrix}
    (x^1)^2 & x^1x^2 & 0 & 0 \\
    x^1x^2 & (x^2)^2 & 0 & 0 \\
    0 & 0 & (x^3)^2 & x^3x^4 \\
    0 & 0 & x^3x^4 & (x^4)^2
  \end{pmatrix}.
\end{align*}
Furthermore, a straightforward computation yields $\Rt^{ijkl}=0$.

\section{Connections and curvature}

\noindent Let $\A$ be an associative $\ast$-algebra. A
$\ast$-derivation is a derivation $\d$ such that
$\d(a)^\ast=\d(a^\ast)$ for all $a\in\A$; by $\Der(\A)$ we shall
denote the vector space (over $\reals$) of $\ast$-derivations of $\A$.  Moreover,
assume that there exists a projector $\D$, acting on the (right) free
module $\A^m$, i.e. $\D\in\End(\A^m)$ and $\D^2=\D$, and by $\TA$ we
denote the corresponding (finitely generated) projective module
$\D(\A^m)$. Letting $\{e_i\}_{i=1}^m$ denote the canonical basis of
$\A^m$, one can write the action of $\D$ as
\begin{equation*}
  \D(U) = e_i{\D^i}_jU^j,
\end{equation*}
for $U=e_iU^i$ (note that there is no difference between lower and
upper indices, but let us keep the notation that is familiar from
differential geometry for now). We also introduce the complementary
projection $\Pi=\mid-\D$. Moreover, for every $\d\in\Der(\A)$ one
defines
\begin{align*}
  \nablab_\d U = e_k\d(U^k)
\end{align*}
corresponding (in the commutative case) to the connection in the
``ambient'' space. Note that the two arguments of the connection are
not on equal footing; one is a derivation and the other one belongs to
a free module. The map $\nablab_\d$ is an affine connection on $\A^m$
in the sense that 
\begin{align}\label{eq:affineConnection}
  \begin{split}
    &\nablab_\d(U+V) = \nablab_\d U+\nablab_\d V\\
    &\nablab_{c\d}U=c\nablab_{\d} U\\
    &\nablab_{\d+\d'}U = \nablab_\d U+\nablab_{\d'}U\\
    &\nablab_{\d}(Ua) = \paraa{\nablab_\d U}a+U\d(a)    
  \end{split}
\end{align}
for $a\in\A$, $c\in\reals$, $\d,\d'\in\Der(\A)$ and
$U,V\in\A^m$. Furthermore, by introducing a metric on $\A^m$ via
\begin{align}\label{eq:metricDef}
  \angles{U,V} = (U_i)^\ast V^i,
\end{align}
for $U=e_iU^i\in\A^m$ and $V=e_iV^i\in\A^m$, it is straightforward to
show that $\nablab$ is a metric connection; i.e.
\begin{align*}
  \d\angles{U,V}-\angles{\nablab_\d U,V}-\angles{U,\nablab_\d V}=0
\end{align*}
for all $\d\in\Der(\A)$ and $U,V\in\A^m$. As for ordinary manifolds,
one proceeds to define a connection on $\TA=\D(\A^m)$ by setting
\begin{align*}
  \nabla_\d U = \D\paraa{\nablab_\d U} = e_i{\D^i}_j\d(U^j)
\end{align*}
for $\d\in\Der(\A)$ and $U=e_iU^i\in\TA$; it follows that $\nabla$
satisfies the requirements \eqref{eq:affineConnection} of an affine
connection. We shall assume that $\D$ is symmetric with respect to the
metric introduced in \eqref{eq:metricDef};
i.e. $\angles{\D(U),V}=\angles{U,\D(V)}$ for all $U,V\in\A^m$. In this
case, $\nabla$ will be a also be a metric connection\footnote{While
  preparing this paper we became aware of
  \cite{zz:projectiveModuleEmbedded} which treats connections on
  projective modules in a somewhat similar way.}. 

Now, let us choose a set of elements $X^1,\ldots,X^m\in\A$ together
with their associated inner $\ast$-derivations
\begin{align*}
  \d^i(a) = \frac{1}{i\hbar}[X^i,a]
\end{align*}
for an arbitrary parameter $\hbar\in\reals$ (in the current setting,
one might as well put $\hbar=1$, but it will be convenient later on).
In analogy with classical geometry, one should think of the $X^i$'s as
embedding coordinates of a manifold into $\reals^m$. A different
choice of embedding does in general lead to a different induced metric
on the submanifold. Therefore, the choice of $X^i$'s amount to a
choice of the metric structure on the algebra.

With the help of the above derivations we introduce, for $U\in\TA$,
\begin{align*}
  \Rt^{ij}(U) = \nabla^i\nabla^jU-\nabla^j\nabla^iU-\nabla_{[\d^i,\d^j]}U,
\end{align*}
where $\nabla^iU = \nabla_{\d^i}U$.  That $\Rt^{ij}$ is a
module homomorphism becomes clear from the following result:
\begin{proposition}
  For $U=e_iU^i\in\TA$ it holds that
  \begin{align*}
    \Rt^{ij}(U) = e_k\parab{\d^i\paraa{{\D^k}_m}\d^j\paraa{{\D^m}_l}
    -\d^j\paraa{{\D^k}_m}\d^i\paraa{{\D^m}_l}}U^l.
  \end{align*}
\end{proposition}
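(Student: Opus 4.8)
The plan is to expand the three terms in the definition of $\Rt^{ij}(U)$ directly, relying on only two structural inputs: the idempotency $\D^2=\D$ (i.e. ${\D^k}_m{\D^m}_l={\D^k}_l$ in components) and the fact that $U\in\TA$ means $\D(U)=U$, i.e. ${\D^l}_pU^p=U^l$. First I would compute $\nabla^jU=e_k{\D^k}_l\d^j(U^l)$ and then apply $\nabla^i$, noting that $\nabla^jU$ already lies in $\TA$; the Leibniz rule for the $\ast$-derivation $\d^i$ gives
\begin{align*}
  \nabla^i\nabla^jU=e_k{\D^k}_m\parab{\d^i\paraa{{\D^m}_l}\d^j(U^l)+{\D^m}_l\d^i\d^j(U^l)}.
\end{align*}
Subtracting the $(i\leftrightarrow j)$ term together with $\nabla_{[\d^i,\d^j]}U=e_k{\D^k}_l\paraa{\d^i\d^j-\d^j\d^i}(U^l)$, the four second-order contributions combine through ${\D^k}_m{\D^m}_l={\D^k}_l$ and cancel identically, leaving
\begin{align*}
  \Rt^{ij}(U)=e_k\parab{{\D^k}_m\d^i\paraa{{\D^m}_l}\d^j(U^l)-{\D^k}_m\d^j\paraa{{\D^m}_l}\d^i(U^l)}.
\end{align*}

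The main obstacle is that this is not yet the claimed expression: the projector still sits on the outside, and one factor is a derivative of $U$ rather than of $\D$. The key tool for reshaping it is the differentiated idempotency relation $\paraa{\d^i\D}\D+\D\paraa{\d^i\D}=\d^i\D$, obtained by applying $\d^i$ to $\D^2=\D$, together with its immediate consequence $\D\paraa{\d^i\D}\D=0$.

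To finish I would substitute $U^l={\D^l}_pU^p$ into the surviving first-order term and apply the Leibniz rule once more to $\d^j(U^l)$. The piece carrying $\d^j(U^p)$ then acquires the factor $\D\paraa{\d^i\D}\D=0$ and drops out, while the remaining piece equals $\bigl[\D\paraa{\d^i\D}\paraa{\d^j\D}\bigr]U$; using $\D\paraa{\d\D}=\d\D-\paraa{\d\D}\D$ for each of the two derivative factors in turn, and then $U=\D(U)$ a final time, this collapses (after relabelling dummy indices) to $\d^i\paraa{{\D^k}_m}\d^j\paraa{{\D^m}_l}U^l$. Antisymmetrising in $i,j$ produces exactly the stated formula. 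The crux of the argument is thus the interplay between $\D\paraa{\d^i\D}\D=0$, the relation $\paraa{\d^i\D}\D+\D\paraa{\d^i\D}=\d^i\D$, and the defining property $U=\D(U)$ of the submodule $\TA$; without the last of these the outer projector cannot be absorbed and the module-homomorphism form would fail.
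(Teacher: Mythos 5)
Your proof is correct and takes essentially the same route as the paper's: expand $\nabla^i\nabla^jU$ with the Leibniz rule, cancel the second-order derivative terms against $\nabla_{[\d^i,\d^j]}U$, and convert the surviving term $\D(\d^i\D)\d^j(U)$ into $(\d^i\D)(\d^j\D)U$ using the differentiated idempotency $\D(\d\D)=\d\D-(\d\D)\D$ together with $\D(U)=U$. The only difference is cosmetic: the paper rewrites ${\D^l}_m\d^j(U^m)$ via Leibniz and $\D(U)=U$ directly, whereas you first substitute $U=\D(U)$ and invoke the derived identity $\D(\d^i\D)\D=0$; the underlying algebra is identical.
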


\begin{proof}
  Let $U\in\TA$ with $U=e_iU^i$. Using that $\D(U)=U$ and Leibnitz rule
  one obtains
  \begin{align*}
    \nabla^i\nabla^j(U) &= e_k{\D^k}_l\d^i\paraa{{\D^l}_m\d^j(U^m)}
    =e_k{\D^k}_l\d^i({\D^l}_m)\d^j(U^m)+e_k{\D^k}_m\d^i\d^j(U^m),
  \end{align*}
  and one may rewrite the first term as
  \begin{align*}
    e_k{\D^k}_l\d^i({\D^l}_m)\d^j(U^m) &=
    e_k\d^i({\D^k}_m)\d^j(U^m)-e_k\d^i({\D^k}_l){\D^l}_m\d^j(U^m)\\
    &=e_k\d^i({\D^k}_l)\d^j({\D^l}_m)U^m.
  \end{align*}
  Hence, it holds that
  \begin{align*}
    \nabla^i\nabla^j(U) = 
    e_k\d^i({\D^k}_l)\d^j({\D^l}_m)U^m+e_k{\D^k}_m\d^i\d^j(U^m),
  \end{align*}
  from which the desired formula follows for $\Rt^{ij}$.
\end{proof}

\noindent Consequently, one introduces
\begin{align*}
  \Rt {^{ijk}}_l = \d^i\paraa{{\D^k}_m}\d^j\paraa{{\D^m}_l}
    -\d^j\paraa{{\D^k}_l}\d^i\paraa{{\D^l}_m}
\end{align*}
giving $\Rt^{ij}(U)=e_k\Rt {^{ijk}}_lU^l$. In analogy with formula 
(\ref{eq:scCommutative}) we define the scalar curvature of $\nabla$ as
\begin{align}
  S = \P_{jl}\P_{ik}\Rt^{ijkl}
\end{align}
where $\P^{ij} = \frac{1}{i\hbar}[X^i,X^j]$.

Furthermore, let us introduce the divergence of an element $U\in\TA$ as:
\begin{align*}
  \div(U) = \nabla_iU^i = \D_{ik}\d^i(U^k)\in\A. 
\end{align*}
Let $\phi:\A\to\complex$ be a $\complex$-linear functional such that
$\phi(ab)=\phi(ba)$ for all $a,b\in\A$; we shall refer to such a
linear functional as a \emph{trace}. Moreover, a trace $\phi$ is said to
be \emph{closed} if it holds that
\begin{align*}
  \phi\paraa{\div(U)} = 0
\end{align*}
for all $U\in\TA$.

Let us, for later convenience, slightly rewrite the condition that $\phi$ is
a trace.

\begin{lemma}\label{lemma:phiClosedEquiv}
  A trace $\phi$ is closed if and only if it holds that
  \begin{align}
    \phi\paraa{[X^i,\Pi_{ik}]U^k} = 0
  \end{align}
  for all $U = e_iU^i\in\TA$.
\end{lemma}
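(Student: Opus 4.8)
The plan is to compute $\phi\paraa{\div(U)}$ directly from the definitions and show that it equals $\phi\paraa{[X^i,\Pi_{ik}]U^k}$ up to a nonzero scalar factor; the stated equivalence then follows immediately, since both conditions are quantified over the same set $U\in\TA$.

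First I would unfold the divergence. Writing $\D=\mid-\Pi$ and $\d^i(U^k)=\frac{1}{i\hbar}[X^i,U^k]$, the definition $\div(U)=\D_{ik}\d^i(U^k)$ becomes
\[
  \div(U)=\frac{1}{i\hbar}\parab{[X^i,U^i]-\Pi_{ik}[X^i,U^k]}.
\]
The first term is a sum of commutators, so the trace property $\phi(ab)=\phi(ba)$ gives $\phi\paraa{[X^i,U^i]}=0$, and this piece drops out of $\phi\paraa{\div(U)}$.

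Next I would massage the surviving term using cyclicity. Expanding $\Pi_{ik}[X^i,U^k]=\Pi_{ik}X^iU^k-\Pi_{ik}U^kX^i$ and applying $\phi\paraa{\Pi_{ik}U^kX^i}=\phi\paraa{X^i\Pi_{ik}U^k}$ to the second summand, the two pieces recombine into a single commutator,
\[
  \phi\paraa{\Pi_{ik}[X^i,U^k]}=\phi\paraa{(\Pi_{ik}X^i-X^i\Pi_{ik})U^k}=-\phi\paraa{[X^i,\Pi_{ik}]U^k}.
\]
Collecting the two contributions yields $\phi\paraa{\div(U)}=\frac{1}{i\hbar}\phi\paraa{[X^i,\Pi_{ik}]U^k}$, and since $\frac{1}{i\hbar}\neq0$ the closedness condition $\phi\paraa{\div(U)}=0$ for all $U\in\TA$ holds precisely when $\phi\paraa{[X^i,\Pi_{ik}]U^k}=0$ for all such $U$.

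The computation is short, so the only real care is bookkeeping: tracking the sign produced by the cyclic rearrangement and keeping the contracted indices $i,k$ straight. I expect the one minor obstacle to be ensuring that the single application of cyclicity to the mixed term $\Pi_{ik}U^kX^i$ moves $X^i$ to the \emph{front} of the entire product (treating $\Pi_{ik}U^k$ as one factor and $X^i$ as the other), rather than merely commuting it past $U^k$; once that is done correctly the commutator $[X^i,\Pi_{ik}]$ appears with the right sign. Note that no step requires $U$ to actually lie in $\TA$, so the identity $\phi\paraa{\div(U)}=\frac{1}{i\hbar}\phi\paraa{[X^i,\Pi_{ik}]U^k}$ is in fact valid for every $U=e_iU^i\in\A^m$.
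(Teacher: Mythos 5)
Your proof is correct and follows essentially the same route as the paper: both arguments reduce $\phi\paraa{\div(U)}$ to $\frac{1}{i\hbar}\phi\paraa{[X^i,\Pi_{ik}]U^k}$ by an integration-by-parts under the trace, using $\D=\mid-\Pi$ and the vanishing of $\phi$ on commutators (the paper phrases this via the Leibniz rule for $\d^i$, you via cyclicity, which is the same fact). Your closing remark that the identity holds for all $U\in\A^m$, not just $U\in\TA$, is a correct minor observation.
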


\begin{proof}
  Using that $\phi$ is a trace, one computes that
  \begin{align*}
    \phi\paraa{\div(U)} &= 
    \phi\paraa{\D_{ik}\d^i(U^k)} =
    \phi\paraa{\d^i(\D_{ik}U^k)-\d^i(\D_{ik})U^k}\\
    &=\phi\paraa{\d^i(\D_{ik})U^k}
    =\phi\paraa{\d^i(\Pi_{ik})U^k}
    =\frac{1}{i\hbar}\phi\paraa{[X^i,\Pi_{ik}]U^k},
  \end{align*}
  from which the statement follows.
\end{proof}

\section{The fuzzy sphere}

\noindent For our purposes, we shall define the fuzzy sphere
\cite{h:phdthesis,m:fuzzysphere} as a (unital associative)
$\ast$-algebra $S^2_\hbar$ on three hermitian generators $X^1,X^2,X^3$
satisfying the following relations:
\begin{align*}
  &[X^i,X^j] = i\hbar\eps^{ijk}X^k\\
  &\paraa{X^1}^2+\paraa{X^2}^2+\paraa{X^3}^2=\mid.
\end{align*}
It is easy to see that, by setting $\Pi^{ij}=X^iX^j$ as a
non-commutative analogue of the classical projection operator, it
holds that
\begin{align*}
  (\Pi^2)^{ij} = \Pi^{ik}\Pi^{kj}  = X^iX^kX^kX^j = X^i\mid X^j = \Pi^{ij},
\end{align*}
which shows that $\Pi$ is a projection operator when considered as an
endomorphism of the free module $(S^2_\hbar)^3$; moreover, $\Pi$ is symmetric since
$(\Pi^{ij})^\ast=X^jX^i=\Pi^{ji}$. Let us note that the similarity with the
commutative formulas is even stronger; namely, one easily checks that
\begin{align*}
  \D^{ij} = \delta^{ij}\mid-X^iX^j = \frac{1}{(i\hbar)^2}[X^j,X^k][X^i,X^k].
\end{align*}
One may proceed and define a connection $\nabla$ on
$TS^2_\hbar=\D\paraa{(S^2_\hbar)^3}$ as in the previous section, and since
the projection operator is symmetric, this is a metric connection. As
it will be helpful in computations, let us remind ourselves of a few
identities involving $\eps^{ijk}$:
\begin{align*}
  &\eps^{ijk}\eps^{imn} = \delta^{jm}\delta^{kn}-\delta^{jn}\delta^{km}\qquad\quad
  \eps^{ikl}\eps^{jkl} = 2\delta^{ij}\\
  &\eps^{ijk}X^jX^k = i\hbar X^i\qquad\qquad\quad
  \eps^{ijk}X^iX^jX^k = i\hbar\mid.
\end{align*}

\noindent Let us now compute the curvature of $\nabla$.

\begin{proposition}
  For the fuzzy sphere, it holds that
  \begin{align*}
    \Rt^{ijkl} &= 
    \paraa{\eps^{ikp}\eps^{jlq}-\eps^{jkp}\eps^{ilq}}X^pX^q
    -i\hbar\eps^{jlq}X^kX^iX^q
    -i\hbar\eps^{jkp}X^pX^iX^l\\
    &\quad+i\hbar\eps^{ikp}X^pX^jX^l
    +i\hbar\eps^{ilq}X^kX^jX^q
    +i\hbar\eps^{ijp}X^kX^pX^l\\
    S &=\paraa{2-3\hbar^2+h^4}\mid.
  \end{align*}
\end{proposition}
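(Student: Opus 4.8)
The whole computation rests on collapsing every expression to a scalar multiple of $\mid$ using the four elementary relations already recorded: $\eps^{ijk}X^jX^k = i\hbar X^i$, $\eps^{ijk}X^iX^jX^k = i\hbar\mid$, the sphere constraint $\sum_k X^kX^k = \mid$, and the two $\eps$-contraction identities. The plan splits into first computing $\Rt^{ijkl}$ and then contracting it against $\P_{jl}\P_{ik}$.

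For the first half I would start from $\D^{km} = \delta^{km}\mid - X^kX^m$, so that $\d^i(\D^{km}) = -\tfrac{1}{i\hbar}[X^i,X^kX^m] = -\eps^{ikp}X^pX^m - \eps^{imq}X^kX^q$. Inserting this into $\Rt^{ijkl} = \d^i(\D^{km})\d^j(\D^{ml}) - \d^j(\D^{km})\d^i(\D^{ml})$, as derived above, gives four products of two bracket-expressions. Summing the repeated index $m$ with the help of $\eps^{jmr}X^mX^r = i\hbar X^j$, $\sum_m X^mX^m = \mid$ and $\eps^{imq}\eps^{jmr} = \delta^{ij}\delta^{qr} - \delta^{ir}\delta^{qj}$ reduces each product to terms with at most four generators. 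The piece proportional to $\delta^{ij}X^kX^l$ is symmetric in $i,j$ and cancels in the antisymmetrization, while the two genuinely quartic survivors combine into $X^k(X^iX^j - X^jX^i)X^l = i\hbar\eps^{ijp}X^kX^pX^l$. This single commutator step, which turns the leftover quartic expression into the cubic term, is the crux of the first half; assembling the remaining pieces reproduces the stated formula for $\Rt^{ijkl}$.

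For the scalar curvature I would use $\P^{ij} = \eps^{ijk}X^k$, so that $\P_{jl}\P_{ik} = \eps^{jla}\eps^{ikb}X^aX^b$, and contract against $\Rt^{ijkl}$ one term at a time, always keeping the fixed order $\P_{jl}\P_{ik}\Rt^{ijkl}$. The leading term $(\eps^{ikp}\eps^{jlq} - \eps^{jkp}\eps^{ilq})X^pX^q$ is pure $\eps$-algebra: its first half, via $\eps^{ikb}\eps^{ikp} = 2\delta^{bp}$ and $\eps^{jla}\eps^{jlq} = 2\delta^{aq}$, becomes $4\sum_{a,b}X^aX^bX^bX^a = 4\mid$, while its second half, via the $\delta\delta$-identity, becomes $-\sum_{a,b}X^aX^bX^aX^b - \mid$; here $\sum_{a,b}X^aX^bX^aX^b = (1-\hbar^2)\mid$ follows by commuting the inner pair and using $\eps^{abc}X^cX^a = i\hbar X^b$, so the leading term contributes $(2+\hbar^2)\mid$. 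Each of the five remaining $i\hbar$-terms carries one additional $\eps$; after contracting the two $\P$-epsilons against it I am left with a five-fold product of generators.

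The main obstacle is exactly these five-fold products. The workhorse I would prove first is the sandwich identity $\sum_a X^aX^bX^a = (1-\hbar^2)X^b$, immediate from $X^aX^b = X^bX^a + i\hbar\eps^{abc}X^c$ together with $\eps^{abc}X^cX^a = i\hbar X^b$; it disposes of every term in which the two summed generators flank a single generator. The one genuinely harder case is the double sandwich $\sum_b X^bX^lX^jX^b$, needed for a single term, which I would evaluate by commuting the trailing $X^b$ leftwards through $X^j$ and $X^l$ and applying the $\eps$-identities twice; a convenient check is that setting $l=j$ and summing returns $\mid$. Once these reductions are in place the terms pair off and largely cancel: two of the $i\hbar$-terms vanish, two each contribute $-2\hbar^2\mid$, and the last contributes $\hbar^4\mid$. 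Adding the $(2+\hbar^2)\mid$ from the leading term gives $S = (2 - 3\hbar^2 + \hbar^4)\mid$, which reassuringly reduces to the classical value $S = 2$ as $\hbar \to 0$.
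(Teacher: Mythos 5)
Your proposal is correct and takes essentially the same route as the paper: you obtain $\Rt^{ijkl}$ exactly as the paper does, by expanding $\d^i\paraa{\D^{km}}\d^j\paraa{\D^{ml}}$ through the commutation relations and $\eps$-identities and letting the antisymmetrization in $i,j$ kill the symmetric pieces and turn the quartic remainder into $i\hbar\eps^{ijp}X^kX^pX^l$, and you then get $S$ by contracting with $\P_{jl}\P_{ik}$. The only (cosmetic) difference is organizational: the paper stages the contraction, first computing $\P^{ik}\Rt^{ijkl}=(1-\hbar^2-\hbar^4)\eps^{jlm}X^m+i\hbar(1-3\hbar^2)X^jX^l+i\hbar^3\delta^{jl}$ and then applying $\P^{jl}$, whereas you contract both factors at once term by term using the sandwich identities $\sum_a X^aX^bX^a=(1-\hbar^2)X^b$ and its double version; your tallies (leading term $(2+\hbar^2)\mid$, two vanishing terms, two contributions of $-2\hbar^2\mid$, one of $\hbar^4\mid$) are all correct and sum to $\paraa{2-3\hbar^2+\hbar^4}\mid$.
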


\begin{proof}
  The proof consists of a straightforward computation. Starting from
  \begin{align*}
    \Rt^{ijkl} &= \d^i\paraa{\D^{km}}\d^j\paraa{\D^{ml}}
    -\d^j\paraa{\D^{km}}\d^i\paraa{\D^{ml}}\\
    &=\d^i\paraa{\Pi^{km}}\d^j\paraa{\Pi^{ml}}
    -\d^j\paraa{\Pi^{km}}\d^i\paraa{\Pi^{ml}}\\
    &=\frac{1}{(i\hbar)^2}[X^i,X^kX^m][X^j,X^mX^l]
    -\frac{1}{(i\hbar)^2}[X^j,X^kX^m][X^i,X^mX^l]
  \end{align*}
  one expands the expression, using $[X^i,X^j]=i\hbar\eps^{ijk}X^k$
  and the $\eps$-identities we previously recalled, to obtain
  \begin{align*}
    \Rt^{ijkl} = 
    &\paraa{\eps^{ikp}\eps^{jlq}-\eps^{jkp}\eps^{ilq}}X^pX^q
    -i\hbar\eps^{jlq}X^kX^iX^q
    -i\hbar\eps^{jkp}X^pX^iX^l\\
    &+i\hbar\eps^{ikp}X^pX^jX^l
    +i\hbar\eps^{ilq}X^kX^jX^q
    +i\hbar\eps^{ijp}X^kX^pX^l.
  \end{align*}
  From this expression one derives
  \begin{align*}
    \P^{ik}\Rt^{ijkl} &= 
    (1-\hbar^2-\hbar^4)\eps^{jlm}X^m
    +i\hbar(1-3\hbar^2)X^jX^l+i\hbar^3\delta^{jl}
  \end{align*}
  again by using the appropriate identities. Finally, the scalar
  curvature is computed
  \begin{align*}
    S &= \P^{jl}\P^{ik}\Rt^{ijkl}\\ 
    &= 
    (1-\hbar^2-\hbar^4)\eps^{jlk}X^k\eps^{jlm}X^m
    +i\hbar(1-3\hbar^2)\eps^{jlk}X^kX^jX^l+i\hbar^3\eps^{jlk}X^k\delta^{jl}\\
    &= 2(1-\hbar^2-\hbar^4)\mid +i\hbar(1-3\hbar^2)i\hbar\mid
    =\paraa{2-3h^2+h^4}\mid,
  \end{align*}
  which proves the statement.
\end{proof}

\noindent Let us now show that every trace on the fuzzy sphere is closed.

\begin{proposition}
  Let $\phi$ be a trace on $S^2_\hbar$. Then $\phi$ is closed.
\end{proposition}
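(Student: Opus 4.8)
The goal is to show that every trace $\phi$ on $S^2_\hbar$ is closed. By Lemma~\ref{lemma:phiClosedEquiv}, closedness is equivalent to the condition
\begin{align*}
  \phi\paraa{[X^i,\Pi_{ik}]U^k} = 0
\end{align*}
for all $U = e_iU^i\in\TA$. My plan is to compute the element $[X^i,\Pi_{ik}]$ explicitly using the defining relations of the fuzzy sphere, and show that it vanishes (or reduces to something manifestly annihilated under a trace), so that the condition holds for every trace regardless of any further structure.

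First I would substitute $\Pi_{ik}=X^iX^k$ and expand, being careful to sum over the repeated index $i$:
\begin{align*}
  [X^i,\Pi_{ik}] = [X^i, X^iX^k] = [X^i,X^i]X^k + X^i[X^i,X^k] = X^i[X^i,X^k],
\end{align*}
since $[X^i,X^i]=0$ for each $i$. Using the commutation relation $[X^i,X^k]=i\hbar\eps^{ikl}X^l$, this becomes $i\hbar\eps^{ikl}X^iX^l$. Now the key observation is that $\eps^{ikl}$ is antisymmetric in $i$ and $l$ while $X^iX^l$ has a symmetric part and an antisymmetric part; contracting, only the antisymmetric part survives, and one finds $\eps^{ikl}X^iX^l = \tfrac12\eps^{ikl}[X^i,X^l] = \tfrac12\eps^{ikl}i\hbar\eps^{ilm}X^m$. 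Applying the identity $\eps^{ikl}\eps^{ilm}$ (which, after relabelling, is a contraction of the form $\eps^{ikl}\eps^{iml}=\delta^{km}\cdot 2$ up to sign bookkeeping) collapses this to a multiple of $X^k$. The upshot is that $[X^i,\Pi_{ik}]$ is proportional to $X^k$, i.e. it equals $c\,X^k$ for some real constant $c$ depending on $\hbar$.

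At this point the argument finishes cleanly. Since $[X^i,\Pi_{ik}] = c\,X^k$, we have
\begin{align*}
  \phi\paraa{[X^i,\Pi_{ik}]U^k} = c\,\phi\paraa{X^kU^k} = c\,\phi\paraa{\angles{X,U}},
\end{align*}
and I would then exploit the constraint that $U\in\TA=\D\paraa{(S^2_\hbar)^3}$, meaning $\D(U)=U$, equivalently $\Pi(U)=0$, i.e. $X^kX^lU^l = 0$ for each $k$ after summing appropriately; more directly, $U^k = \D^{kl}U^l = U^k - X^kX^lU^l$ forces $X^kX^lU^l=0$, and hence $\phi(X^kU^k)$ can be shown to vanish by inserting $\mid = \sum_l (X^l)^2$ and using the trace property $\phi(ab)=\phi(ba)$ to shuffle factors. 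The step I expect to be the genuine obstacle is the careful index bookkeeping in reducing $[X^i,\Pi_{ik}]$ and then correctly invoking the projector condition $\Pi(U)=0$ to kill the remaining term; the combinatorial $\eps$-contractions are routine but error-prone, and one must verify that the surviving multiple of $X^k$ is indeed annihilated once $U$ is restricted to lie in the tangent module rather than the full free module.
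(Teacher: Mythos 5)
Your proposal is correct and follows essentially the same route as the paper: reduce via Lemma~\ref{lemma:phiClosedEquiv}, compute $[X^i,\Pi^{ik}]=X^i[X^i,X^k]=i\hbar\eps^{ikl}X^iX^l=\hbar^2X^k$ using the $\eps$-identities, and then kill $X^kU^k$ by combining $\Pi(U)=0$ with the sphere relation $\sum_i(X^i)^2=\mid$. The only cosmetic difference is that the paper multiplies $X^iX^kU^k=0$ by $X^i$ on the left and sums (no trace-cyclicity needed in this last step, contrary to your hedge), which gives $X^kU^k=0$ identically in the algebra.
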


\begin{proof}
  Starting from the formula in Lemma \ref{lemma:phiClosedEquiv} one
  computes
  \begin{align*}
    \frac{1}{i\hbar}\phi\paraa{[X^i,\Pi^{ik}]U^k}
    &= \frac{1}{i\hbar}\phi\paraa{[X^i,X^iX^k]U^k}
    = \frac{1}{i\hbar}\phi\paraa{X^i[X^i,X^k]U^k}\\
    &= \phi\paraa{\eps^{ikl}X^iX^lU^k}
    =-i\hbar\phi\paraa{X^kU^k}
  \end{align*}
  Now, since $U\in\TA$ it holds that $\Pi(U)=0$, which is equivalent to
  \begin{align*}
    X^iX^kU^k = 0
  \end{align*}
  for $i=1,2,3$. Multiplying the above equation by $X^i$ from the
  left, and summing over $i$ yields
  \begin{align*}
    0 = X^iX^iX^kU^k = X^kU^k.
  \end{align*}
  Thus, $X^kU^k=0$, which proves that $\phi$ is closed.
\end{proof}

\noindent Note that one may easily compute the rank of the
module $TS^2_\hbar$ and its complementary module $\N =
\Pi\paraa{(S^2_\hbar)^3}$ as the trace of the corresponding
projections; i.e.
\begin{align*}
  &\rank(TS^2_\hbar) = \sum_{i=1}^3\D^{ii} =
  \sum_{i=1}^3\paraa{\delta^{ii}\mid-X^iX^i)}=2\mid\\
  &\rank(\N) = \sum_{i=1}^3\Pi^{ii}=\sum_{i=1}^3X^iX^i = \mid,
\end{align*}
corresponding to the geometric dimensions in the commutative
setting. Moreover, the module $\N$ turns out to be a free module.

\begin{proposition}
  The module $\N = \Pi\paraa{(S^2_\hbar)^3}$ is a free module of rank
  $1$, and it is generated by $X = e_iX^i$.
\end{proposition}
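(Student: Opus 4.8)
The plan is to show that right multiplication by the distinguished element $X = e_iX^i$ furnishes an isomorphism of right $S^2_\hbar$-modules $\mu:S^2_\hbar\to\N$, $\mu(a) = Xa = e_iX^ia$. Since $\N=\Pi\paraa{(S^2_\hbar)^3}$ inherits the right module structure of the ambient free module, $\mu$ is automatically a right module homomorphism, so the entire content reduces to checking that $X$ actually lies in $\N$ and that $\mu$ is a bijection. The guiding observation throughout will be the defining sphere relation $X^jX^j=\mid$, which is what makes each of the three verifications collapse to a one-line computation.

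First I would confirm that $X\in\N$, i.e. that $\Pi(X)=X$: one computes $\Pi(X) = e_i\Pi^{ij}X^j = e_iX^iX^jX^j = e_iX^i\mid = X$, using $X^jX^j=\mid$. Next, for surjectivity, every element of $\N$ has the form $\Pi(V)$ for some $V=e_jV^j\in(S^2_\hbar)^3$, and $\Pi(V) = e_iX^iX^jV^j = X\paraa{X^jV^j}=\mu\paraa{X^jV^j}$, so $\mu$ is onto, with the explicit preimage $a=X^jV^j$. Finally, for injectivity, suppose $\mu(a)=0$; then $X^ia=0$ for each $i$ (these are the components in the free module), and multiplying by $X^i$ and summing over $i$ gives $0 = X^iX^ia = \mid a = a$, so $\mu$ is injective.

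Assembling these steps, $\mu$ is a bijective right module homomorphism, hence an isomorphism $S^2_\hbar\cong\N$, which exhibits $\N$ as a free module of rank $1$ generated by $X$.

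I do not expect a genuine obstacle here: the only ingredient beyond bookkeeping is the sphere relation, and the rank computation $\rank(\N)=\sum_i X^iX^i=\mid$ already anticipates the result. The one place where care is needed is the side on which scalars act: the metric and connection conventions of the paper use the right action $Ua$, so $\mu$ must be set up as right multiplication by $X$, and this is the single spot where a side error could plausibly creep in.
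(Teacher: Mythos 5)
Your proof is correct and follows essentially the same route as the paper: both arguments rest on the two identical computations, namely that any element of $\N$ equals $X\cdot\paraa{X^jV^j}$ (generation/surjectivity) and that $X^ia=0$ for all $i$ forces $a=0$ after multiplying by $X^i$ and summing (freeness/injectivity), with the sphere relation $X^iX^i=\mid$ doing all the work. Your packaging of these facts as an explicit right-module isomorphism $\mu:S^2_\hbar\to\N$, together with the explicit check that $\Pi(X)=X$, is a slightly tidier presentation of the same argument, not a different one.
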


\begin{proof}
  An element $N=e_iN^i\in\N$ satisfies
  \begin{align*}
    X^iX^jN^j = N^i
  \end{align*}
  for $i=1,2,3$, which implies that there exists an element
  $a=X^jN^j\in\A$ such that $N = e_iX^i\cdot a$. This proves that
  $e_iX^i$ generates $\N$. Furthermore, one computes that
  \begin{align*}
    0 = X^ia \implies 0 = X^iX^ia\implies 0 = a,
  \end{align*}
  which shows that $\N$ is indeed a free module.
\end{proof}

\section{The non-commutative torus}

\noindent The non-commutative torus $\A_\theta$ (for
$\theta\in\reals$) \cite{c:cstargeometrie} is defined as the unital
associative $\ast$-algebra on two unitary generators $U,V$ satisfying
the following relation
\begin{align*}
  VU = qUV
\end{align*}
with $q=e^{2i\theta}$. Defining hermitian elements
\begin{align*}
  &X^1 = \frac{1}{2\sqrt{2}}(U^\ast+U)\qquad 
  X^2 = \frac{i}{2\sqrt{2}}(U^\ast-U)\\
  &X^3 = \frac{1}{2\sqrt{2}}(V^\ast+V)\qquad
  X^4 = \frac{i}{2\sqrt{2}}(V^\ast-V)
\end{align*}
it follows that
\begin{align}
  &[X^1,X^2] = [X^3,X^4] = 0\label{eq:X1X2X3X4Commute}\\
  &[X^1,X^3] = i\hbar\paraa{X^2X^4+X^4X^2}\label{eq:X1X3Commutator}\\
  &[X^2,X^4] = i\hbar\paraa{X^1X^3+X^3X^1}\label{eq:X2X4Commutator}\\
  &[X^1,X^4] = -i\hbar\paraa{X^2X^3+X^3X^2}\label{eq:X1X4Commutator}\\
  &[X^2,X^3] = -i\hbar\paraa{X^1X^4+X^4X^1}\label{eq:X2X3Commutator}\\
  &(X^1)^2 + (X^2)^2 = (X^3)^2+(X^4)^2 = \frac{1}{2}\mid.\label{eq:sumSqHalf}
\end{align}
with $\hbar=\tan\theta$. Conversely, one can show that the above
relations imply that 
\begin{align*}
  U = \sqrt{2}\paraa{X^1+iX^2}\qquad
  V = \sqrt{2}\paraa{X^3+iX^4}
\end{align*}
are unitary elements satisfying $VU=qUV$. Namely, since
$[X^1,X^2]=[X^3,X^4]=0$ is follows immediately that
$[U,U^\ast]=[V,V^\ast]=0$, and from $(X^1)^2+(X^2)^2=\mid/2$ and
$(X^3)^2+(X^4)^2=\mid/2$ it follows that 
\begin{align*}
  UU^\ast+U^\ast U=VV^\ast+V^\ast V = 2\mid,  
\end{align*}
which, together with $[U,U^\ast]=[V,V^\ast]=0$, implies that $U$ and
$V$ are unitary. Furthermore, noting that
(\ref{eq:X1X3Commutator})--(\ref{eq:X2X3Commutator})
implies that 
\begin{align*}
  &X^3X^1=\cos(2\theta)X^1X^3-i\sin(2\theta)X^2X^4\\
  &X^4X^2=\cos(2\theta)X^2X^4-i\sin(2\theta)X^1X^3\\
  &X^4X^1=\cos(2\theta)X^1X^4+i\sin(2\theta)X^2X^3\\
  &X^3X^2=\cos(2\theta)X^2X^3+i\sin(2\theta)X^1X^4
\end{align*}
one readily shows that $VU=qUV$.

Since there is a natural split of the $X^i$'s into two groups, let us
develop some notation reflecting this fact.  Greek indices
$\alpha,\beta,\ldots$ will take values in $\{1,2\}$ and ``barred''
indices $\alphab,\betab,\ldots$ take values in $\{3,4\}$. With this
notation, the projector $\Pi$ may be defined as (in analogy with the
classical formula)
\begin{align*}
  &\Pi^{\alpha\alphab}=\Pi^{\alphab\alpha} = 0\\
  &\Pi^{\alpha\beta} = 2X^\alpha X^\beta\qquad \Pi^{\alphab\betab} = 2X^{\alphab} X^{\betab}
\end{align*}
and one checks that $\Pi^2 = \Pi$, as well as
$(\Pi^{ij})^\ast=\Pi^{ji}$. For the forthcoming computations, one
notes that
\begin{align}
  &[X^\alpha,X^\beta] = [X^{\alphab},X^{\betab}] = 0\label{eq:XXalphabetazero}\\
  &[X^\alpha,\Pi^{\beta\gamma}]
  =[X^{\alphab},\Pi^{\betab\gammab}] = 0.\label{eq:XPiZero}
\end{align}

\begin{proposition}
  The curvature $\Rt$ of $\A_\theta$ vanishes; i.e
  \begin{align*}
    \Rt^{ijkl} = 0
  \end{align*}
  for $i,j,k,l\in\{1,2,3,4\}$.
\end{proposition}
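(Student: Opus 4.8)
The plan is to eliminate almost every component by a bookkeeping argument based on the block structure of $\Pi$, and then to treat the handful of survivors by recognizing the tangent and normal pieces as rank-one modules on which the curvature becomes transparent. First I would rewrite the defining expression
$\Rt^{ijkl} = \d^i(\D^{km})\d^j(\D^{ml})-\d^j(\D^{km})\d^i(\D^{ml})$
purely in terms of $\Pi$, using $\D=\mid-\Pi$ and $\d^i(\delta^{km})=0$, so that only $\d^i(\Pi^{km})\d^j(\Pi^{ml})-(i\leftrightarrow j)$ survives. Since $\Pi^{km}$ vanishes unless $k,m$ lie in the same group $\{1,2\}$ or $\{3,4\}$, a nonzero term forces $k,m,l$ into one common group; by \eqref{eq:XPiZero} the factor $\d^i(\Pi^{km})$ then vanishes whenever $i$ lies in that same group, so both $i$ and $j$ must lie in the opposite group. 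Because $\Rt^{ij}$ is antisymmetric in $i,j$ and each group has only two indices, this leaves exactly the components $\Rt^{34kl}$ with $k,l\in\{1,2\}$ and, by the symmetric role of the two groups, $\Rt^{12\bar k\bar l}$ with $\bar k,\bar l\in\{3,4\}$.

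For the surviving Greek case I would use that $\Pi=2vv^\top$ and $\D=2\tilde v\tilde v^\top$ are rank-one projectors, with $v=(X^1,X^2)$ and $\tilde v=(-X^2,X^1)$ satisfying $v^\top v=\tilde v^\top\tilde v=\tfrac12\mid$ and $v^\top\tilde v=0$ by \eqref{eq:XXalphabetazero}; thus $\A^2$ splits as $T\oplus\N$ with both summands free of rank one (exactly as in the sphere computation for $\N$), generated by $e_\alpha\tilde v^\alpha$ and $e_\alpha v^\alpha$. The projector identity $\d^i(\D)=\D\d^i(\D)\Pi+\Pi\d^i(\D)\D$ forces $\Rt^{ij}=\D\Rt^{ij}\D+\Pi\Rt^{ij}\Pi$ with no cross terms, so $\Rt^{34}$ preserves the splitting and it suffices to show the induced curvatures on the two line modules vanish; since $\Rt^{ij}$ is a module homomorphism (as shown above), vanishing on each generator gives vanishing on each summand, hence $\Rt^{34kl}=0$ for all $k,l$.

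On the line module $T$ I would write the connection one-form $\omega^i=2\tilde v^\top\d^i(\tilde v)$, so that $\nabla^i(e_\alpha\tilde v^\alpha)=(e_\alpha\tilde v^\alpha)\omega^i$ and the curvature reduces to the line-bundle expression $\d^3\omega^4-\d^4\omega^3+[\omega^3,\omega^4]-\Omega^{34}$, where $\nabla_{[\d^3,\d^4]}(e_\alpha\tilde v^\alpha)=(e_\alpha\tilde v^\alpha)\Omega^{34}$. A short reordering computation using \eqref{eq:X1X3Commutator}--\eqref{eq:X2X3Commutator} and the constraint \eqref{eq:sumSqHalf}, whose crux is the identity $X^1X^3X^1+X^2X^3X^2=\tfrac12\cos(2\theta)X^3$, collapses everything to the remarkably simple form $\omega^3=i\sin(2\theta)X^3$ and $\omega^4=i\sin(2\theta)X^4$ (and an identical computation with $\tilde v$ replaced by $v$ handles the complementary connection $\Pi\nablab$ on $\N$).

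With these one-forms the conclusion is forced by \eqref{eq:X1X2X3X4Commute}: since $[X^3,X^4]=0$, the inner derivations commute, $[\d^3,\d^4]=\tfrac{1}{(i\hbar)^2}[[X^3,X^4],\,\cdot\,]=0$ so $\Omega^{34}=0$, while $\d^3(X^4)=\d^4(X^3)=0$ and $[\omega^3,\omega^4]\propto[X^3,X^4]=0$; every term therefore vanishes, and likewise for $\N$, so $\Rt^{34kl}=0$. The symmetric computation disposes of $\Rt^{12\bar k\bar l}$. The hard part will be the middle step: extracting the closed form of $\omega^i$ requires normal-ordering the mixed Greek--barred products and genuinely uses all of the relations \eqref{eq:X1X3Commutator}--\eqref{eq:sumSqHalf}; once that single identity is secured, the final cancellation is dictated entirely by the commutativity $[X^1,X^2]=[X^3,X^4]=0$, which is the algebraic shadow of the flatness of the Clifford torus.
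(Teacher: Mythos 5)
Your proof is correct, and while its first half coincides with the paper's, the second half takes a genuinely different route. The reduction step is the same in both: the block structure of $\Pi$ together with \eqref{eq:XXalphabetazero} and \eqref{eq:XPiZero} kills every component except (up to antisymmetry in $i,j$) $\Rt^{\alpha\beta\alphab\betab}$ and $\Rt^{\alphab\betab\alpha\beta}$. For these survivors the paper computes head-on: passing to the unitaries $U,V$, it expands $[X^\alpha,X^{\alphab}X^{\gammab}][X^\beta,X^{\gammab}X^{\betab}]$ (summed over $\gammab$) and observes that the result is \emph{symmetric} under $\alpha\leftrightarrow\beta$, so the antisymmetrized combination \eqref{eq:torusRab} vanishes. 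You instead argue structurally: with $v=(X^1,X^2)$ and $\tilde v=(-X^2,X^1)$, the Greek block $\A_\theta^2\subset\A_\theta^4$ splits as two free rank-one modules generated by $e_\alpha\tilde v^\alpha$ and $e_\alpha v^\alpha$; the identity $\D\d^i(\D)\D=\Pi\d^i(\D)\Pi=0$ (a consequence of $\D^2=\D$) shows $\Rt^{34}$ preserves this splitting; and on each line module the connection is a one-form, which you compute in closed form, so flatness is forced by $[X^3,X^4]=0$. I checked the key computations and they hold: your identity $X^1X^3X^1+X^2X^3X^2=\tfrac12\cos(2\theta)X^3$ is correct, and with $\hbar=\tan\theta$ it yields $\omega^3=\tfrac{1}{i\hbar}\paraa{\cos(2\theta)-1}X^3=i\sin(2\theta)X^3$ (likewise $\omega^4=i\sin(2\theta)X^4$); moreover the normal one-form is literally the same element, since $2\tilde v^\beta\d^i(\tilde v^\beta)=2v^\beta\d^i(v^\beta)$. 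One implicit step deserves explicit mention: identifying the action of the matrix $\Rt^{ijkl}$ on $\N$ with the curvature of the complementary connection $\Pi\nablab$ requires the $\Pi$-analogue of the paper's proposition expressing the curvature of $\nabla$ on $\TA$ through $\d^i(\D)\d^j(\D)$; this holds by the identical proof because $\d^i(\Pi)=-\d^i(\D)$, but it should be stated. As for what each route buys: the paper's argument is shorter and self-contained at that point in the text, needing no choice of generators; yours explains the vanishing conceptually (both induced line-module connections take values in the commutative subalgebra generated by $X^3,X^4$, resp.\ $X^1,X^2$), produces the connection one-forms explicitly, and effectively pre-proves the paper's later propositions that $T\A_\theta$ and $\N\A_\theta$ are free modules --- your generators are exactly the $E_1$, $E_2$, $N_\pm$ appearing there.
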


\begin{proof}
  Using \eqref{eq:XXalphabetazero} and \eqref{eq:XPiZero}, it is easy
  to see that 
  \begin{align*}
    \Rt^{\alpha\alphab kl}=\Rt^{\alpha\beta\gamma k}
    =\Rt^{\alpha\beta k \gamma}=\Rt^{\alphab\betab\gammab k}
    =\Rt^{\alphab\betab k\gammab}
    =0.
  \end{align*}
  Thus, it remains to show that
  $\Rt^{\alpha\beta\alphab\betab}=\Rt^{\alphab\betab\alpha\beta}=0$; let
  us outline the calculation for $\Rt^{\alpha\beta\alphab\betab}$. It
  turns out to be slightly easier to perform the computation using
  variables $U$ and $V$ instead of $X^i$, and one writes
  \begin{align*}
    &X^\alpha =
    \frac{i^{\alpha-1}}{2\sqrt{2}}\paraa{U^\ast+(-1)^{\alpha-1}U}
    \qquad(\alpha=1,2)\\
    &X^{\alphab} = \frac{-i^{\alphab-1}}{2\sqrt{2}}
    \paraa{V^\ast+(-1)^{\alphab-1}V}
    \qquad(\alphab=3,4).
  \end{align*}
  Since 
  \begin{equation}\label{eq:torusRab}
    \begin{split}
      \Rt^{\alpha\beta\alphab\betab} &=
      \d^\alpha\paraa{\Pi^{\alphab\gammab}}\d^\beta\paraa{\Pi^{\gammab\betab}}
      -\d^\beta\paraa{\Pi^{\alphab\gammab}}\d^\alpha\paraa{\Pi^{\gammab\betab}}\\
      &= \frac{4}{(i\hbar)^2}[X^\alpha,X^{\alphab}X^{\gammab}]
      [X^\beta,X^{\gammab}X^{\betab}]
      -\frac{4}{(i\hbar)^2}[X^\beta,X^{\alphab}X^{\gammab}]
      [X^\alpha,X^{\gammab}X^{\betab}],
    \end{split}
  \end{equation}
  let us start by computing $2[X^\alpha,X^{\alphab}X^{\gammab}]$:
  \begin{align*}
    2[X^\alpha,X^{\alphab}X^{\gammab}] &=
    \frac{i^{\alpha+\alphab+\gammab+1}}{8\sqrt{2}}
    [U^\ast+(-1)^{\alpha-1}U,\paraa{V^\ast+(-1)^{\alphab-1}V}
    \paraa{V^\ast+(-1)^{\gammab-1}V}]\\
    &= \frac{i^{\alpha+\alphab+\gammab+1}}{8\sqrt{2}}\Big(
      (1-q^2)U^\ast(V^\ast)^2
      +(-1)^{\alpha+\alphab+\gammab-1}(1-q^2)UV^2\\
      &\qquad+(-1)^{\alpha-1}(1-\qb^2)U(V^\ast)^2
      +(-1)^{\alphab+\gammab}(1-\qb^2)U^\ast V^2\Big)
  \end{align*}
  by using $VU=qUV$ and $V^\ast U=\qb UV^\ast$. Subsequently, using
  this result, one computes (sum over $\gammab$ implied)
  \begin{align*}
    [X^\alpha,X^{\alphab}X^{\gammab}][X^\beta,&X^{\gammab}X^{\betab}]=
    \frac{-i^{\alpha+\beta+\alphab+\betab}}{64}\Big(
    \qb^2(1-q^2)^2\paraa{(-1)^{\beta+\betab-1}+(-1)^{\alpha+\alphab-1}}\mid\\
    &+q^2(1-\qb^2)\paraa{(-1)^{\alpha+\betab-1}+(-1)^{\alphab+\beta-1}}\mid\\
    &+(1-q^2)(1-\qb^2)\paraa{(-1)^{\alphab}\qb^2+(-1)^{\betab}q^2}(U^\ast)^2\\
    &+(1-q^2)(1-\qb^2)\paraa{(-1)^{\alpha+\beta+\alphab}q^2+(-1)^{\alpha+\beta+\betab}\qb^2}U^2
    \Big)
  \end{align*}
  where many terms vanish due to the fact that anything proportional
  to $(-1)^{\gammab}$ cancel when summing over $\gammab$. Since
  \begin{align*}
    q^2(1-\qb^2)^2 = q^2+\qb^2-2 = \qb^2(1-q^2)^2
  \end{align*}
  one notes that the previous expression is \emph{symmetric} with respect
  to interchanging $\alpha$ and $\beta$, which implies, via
  \eqref{eq:torusRab}, that $\Rt^{\alpha\beta\alphab\betab}=0$.
\end{proof}

\noindent Let us now show that, as for the fuzzy sphere, every trace
on $\A_\theta$ is closed.

\begin{proposition}
  Let $\phi$ be a trace on $\A_\theta$. Then $\phi$ is closed.
\end{proposition}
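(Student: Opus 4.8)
The plan is to reduce the statement to Lemma \ref{lemma:phiClosedEquiv}, which asserts that $\phi$ is closed precisely when $\phi\paraa{[X^i,\Pi_{ik}]U^k}=0$ for every $U=e_iU^i\in\TA$. The key observation I would make is that, in sharp contrast to the fuzzy sphere, here the element $\sum_i[X^i,\Pi_{ik}]$ vanishes \emph{identically} in $\A_\theta$ for each fixed $k$, so that closedness follows without invoking the trace property of $\phi$ or the module condition $\Pi(U)=0$ at all. Whereas the sphere computation produces $[X^i,\Pi^{ik}]\propto\eps^{ikl}X^iX^l$, which is genuinely nonzero and only gets killed after applying $\phi$ together with $X^kU^k=0$, the torus projector is block diagonal and factorizes in a way that trivializes the commutator.

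To make this precise I would exploit the Greek/barred index split. Fix $k$ and suppose first that $k=\gamma$ is Greek. Since the mixed blocks of $\Pi$ vanish ($\Pi^{\alphab\gamma}=0$), only the Greek values of the summation index $i$ contribute, giving $\sum_i[X^i,\Pi^{i\gamma}]=[X^\alpha,\Pi^{\alpha\gamma}]$ (summed over $\alpha\in\{1,2\}$). But this is exactly a commutator of the form covered by \eqref{eq:XPiZero}, namely $[X^\alpha,\Pi^{\beta\gamma}]$ with $\beta=\alpha$, and hence vanishes; equivalently, one expands $[X^\alpha,2X^\alpha X^\gamma]=2X^\alpha[X^\alpha,X^\gamma]$ and invokes the intra-block commutativity \eqref{eq:XXalphabetazero} directly. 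The case $k=\gammab$ barred is identical, using $\Pi^{\alpha\gammab}=0$ together with the vanishing of $[X^{\alphab},\Pi^{\betab\gammab}]$ from \eqref{eq:XPiZero}.

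Having established that $[X^i,\Pi_{ik}]=0$ for every $k$, the integrand $[X^i,\Pi_{ik}]U^k$ in the criterion of Lemma \ref{lemma:phiClosedEquiv} is the zero element of $\A_\theta$, so $\phi$ applied to it vanishes for any $\complex$-linear functional, in particular for any trace. I do not anticipate any real obstacle: the entire content is the recognition that the factorized, block-diagonal form of the torus projector forces the relevant commutators to vanish on the nose, a flatness-type simplification already reflected in the vanishing $\Rt^{ijkl}=0$.
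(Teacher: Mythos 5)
Your proposal is correct and follows essentially the same route as the paper: both reduce the claim to Lemma \ref{lemma:phiClosedEquiv} and then show $[X^i,\Pi^{ik}]=0$ identically for each fixed $k$, by splitting the sum over $i$ into Greek and barred indices and invoking the vanishing of the mixed blocks of $\Pi$ together with \eqref{eq:XXalphabetazero}/\eqref{eq:XPiZero}. Your additional observation that this makes the trace property and the condition $\Pi(U)=0$ unnecessary (in contrast to the fuzzy sphere) is accurate but does not change the argument.
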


\begin{proof}
  Let us prove that $[X^i,\Pi^{ik}]=0$ for $k=1,2,3,4$. Lemma
  \ref{lemma:phiClosedEquiv} then implies that $\phi$ is closed.
  First, assume that $k=\beta$:
  \begin{align*}
    [X^i,\Pi^{i\beta}] = [X^\alpha,\Pi^{\alpha\beta}] + 
    [X^{\alphab},\Pi^{\alphab\beta}] = 0,
  \end{align*}
  since $\Pi^{\alphab\beta}=0$ and
  $[X^{\alpha},\Pi^{\alpha\beta}]=0$. Similarly, when $k=\betab$ one
  obtains
  \begin{align*}
    [X^i,\Pi^{i\betab}] = [X^\alpha,\Pi^{\alpha\betab}]
    +[X^{\alphab},\Pi^{\alphab\betab}] = 0,
  \end{align*}
  which proves that $[X^i,\Pi^{ik}]=0$.
\end{proof}

\noindent The rank of $T\A_\theta=\D(\A_\theta^4)$ and
$\N\A_\theta=\Pi(\A_\theta^4)$ can again be computed via the trace of
the corresponding projection operators:
\begin{align*}
  &\rank(T\A_\theta) = \sum_{i=1}^4\paraa{\delta^{ii}\mid-2X^iX^i} =
  2\mid\\ 
  &\rank(\N\A_\theta) = \sum_{i=1}^4\paraa{2X^iX^i} = 2\mid. 
\end{align*}

\noindent Now, let us show that, in fact, both $T\A_\theta$ and
$\N\A_\theta$ are free modules.

\begin{proposition}
  The module $T\A_\theta = \D(\A_\theta^4)$ is a free module of rank
  $2$, with a basis given by $E_1 = -e_1X^2+e_2X^1$ and $E_2 =
  -e_3X^4+e_4X^3$.
\end{proposition}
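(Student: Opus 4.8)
The plan is to exploit the block-diagonal structure of $\D$ (equivalently $\Pi$), which decouples the problem into two identical copies of a one-dimensional ``circle'' computation. Since $\Pi^{\alpha\alphab}=\Pi^{\alphab\alpha}=0$, the projector acts independently on the coordinate pairs $(U^1,U^2)$ and $(U^3,U^4)$, so it suffices to treat the $\{1,2\}$ block in detail; the $\{3,4\}$ block is handled identically with $E_1$ replaced by $E_2$. First I would check that $E_1$ and $E_2$ genuinely lie in $T\A_\theta=\D(\A_\theta^4)$ by verifying $\Pi(E_1)=\Pi(E_2)=0$. For $E_1=-e_1X^2+e_2X^1$ this is a one-line computation using only $[X^1,X^2]=0$ from \eqref{eq:XXalphabetazero}: the $\alpha$-components of $\Pi(E_1)$ collapse to $2(X^1)^2X^2-2(X^1)^2X^2$ and $2(X^2)^2X^1-2(X^2)^2X^1$, while the barred components vanish because $\Pi^{\alpha\alphab}=0$.

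For generation, take $U=e_iU^i\in T\A_\theta$, so $\Pi(U)=0$. Vanishing of the $\alpha$-components gives $X^\alpha\paraa{X^1U^1+X^2U^2}=0$ for $\alpha=1,2$; multiplying these by $2X^1$ and $2X^2$ respectively and summing, the normalization $(X^1)^2+(X^2)^2=\tfrac{1}{2}\mid$ from \eqref{eq:sumSqHalf} collapses the result to $X^1U^1+X^2U^2=0$, and likewise $X^3U^3+X^4U^4=0$. I then claim that the element $a=2\paraa{X^1U^2-X^2U^1}\in\A_\theta$ reconstructs the first block, namely $U^1=-X^2a$ and $U^2=X^1a$. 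This is verified by direct expansion: using $[X^1,X^2]=0$ to write $X^2X^1U^2=X^1X^2U^2=-(X^1)^2U^1$ (the last equality via the constraint), one finds $-X^2a=2\paraa{(X^1)^2+(X^2)^2}U^1=U^1$, and symmetrically for $U^2$. Setting $b=2\paraa{X^3U^4-X^4U^3}$ for the barred block then yields $U=E_1a+E_2b$, so $E_1,E_2$ generate $T\A_\theta$.

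Finally, for linear independence (which upgrades generation to a basis and gives freeness of rank $2$), suppose $E_1a+E_2b=0$ for $a,b\in\A_\theta$. Comparing the four free-module components forces $X^1a=X^2a=0$ and $X^3b=X^4b=0$; multiplying the first pair by $2X^1$ and $2X^2$, summing, and applying \eqref{eq:sumSqHalf} again gives $a=0$, and symmetrically $b=0$. The step I expect to be the crux is the generation argument, specifically producing and verifying the reconstruction coefficient $a$; everything else is routine. I do not anticipate a genuine obstacle, however, since the whole argument is a faithful transcription of the classical orthogonal-complement computation for the circle, the only subtlety being the systematic use of the block commutativity \eqref{eq:XXalphabetazero} together with the normalization \eqref{eq:sumSqHalf}, which jointly play the role of the classical Pythagorean identity.
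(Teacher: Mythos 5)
Your proposal is correct and follows essentially the same route as the paper: verify $E_1,E_2\in T\A_\theta$, produce the very same spanning coefficients $a=2\paraa{X^1U^2-X^2U^1}$ and $b=2\paraa{X^3U^4-X^4U^3}$, and prove linear independence by multiplying by $X^\alpha$ (resp. $X^{\alphab}$), summing, and invoking $(X^1)^2+(X^2)^2=(X^3)^2+(X^4)^2=\tfrac{1}{2}\mid$. The only cosmetic difference is in the spanning step: the paper computes $\D(U)$ for an \emph{arbitrary} $U\in\A_\theta^4$ and reads off the factorization through $E_1,E_2$ directly, whereas you characterize $T\A_\theta$ as the kernel of $\Pi$, first derive $X^1U^1+X^2U^2=X^3U^3+X^4U^4=0$, and then verify the reconstruction --- a slightly longer but equivalent computation.
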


\begin{proof}
  First of all, it is easy to check that $\D(E_1)=E_1$ and
  $\D(E_2)=E_2$, which implies that $E_1,E_2\in T\A_\theta$. Moreover,
  $E_1$ and $E_2$ are linearly independent, since
  \begin{align*}
    &E_1a + E_1b = 0\implies
    (-X^2a,X^1a,-X^4b,X^3b)=(0,0,0,0)\implies\\
    &\begin{cases}
      \paraa{(X^1)^2+(X^2)^2}a = 0\\
      \paraa{(X^3)^2+(X^4)^2}b = 0
    \end{cases}\implies
    a=b=0.
  \end{align*}
  Let us now show that $E_1$ and $E_2$ span $T\A_\theta$. By
  definition of $T\A_\theta$ there exists, for every $Y\in
  T\A_\theta$, and element $U\in\A_\theta^4$ such that $Y=\D(U)$. One
  readily computes that
  \begin{align*}
    &\D(U)^1 = -X^2\paraa{2X^1U^2-2X^2U^1}\\
    &\D(U)^2 = X^1\paraa{2X^1U^2-2X^2U^1}\\
    &\D(U)^3 = -X^4\paraa{2X^3U^4-2X^4U^3}\\
    &\D(U)^4 = X^3\paraa{2X^3U^4-2X^4U^3};
  \end{align*}
  that is, for every $U\in\A_\theta^4$, there exist $a,b\in\A_\theta$
  such that $\D(U)=E_1a+E_2b$, which implies that $E_1$ and $E_2$ span
  $T\A_\theta$.
\end{proof}

\begin{proposition}
  The module $\N\A_\theta = \Pi(\A_\theta^4)$ is a free module of
  rank $2$, with a basis given by $N_{\pm} = e_1X^1+e_2X^2\pm e_3X^3\pm
  e_4X^4$.
\end{proposition}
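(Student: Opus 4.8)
The plan is to show $N_{\pm}$ lie in the module, are linearly independent, and span it.The plan is to verify, in order, that each $N_{\pm}$ lies in $\N\A_\theta$, that $N_+$ and $N_-$ are linearly independent, and that together they span the module; the argument runs closely parallel to the preceding proposition for $T\A_\theta = \D(\A_\theta^4)$, with the roles of the diagonal and off-diagonal blocks of the projector interchanged.

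First I would confirm membership by a direct application of $\Pi$. Writing $N_+ = e_iN_+^i$ with $N_+^1 = X^1$, $N_+^2 = X^2$, $N_+^3 = X^3$, $N_+^4 = X^4$ (and the analogous signs for $N_-$), the block-diagonal form $\Pi^{\alpha\beta} = 2X^\alpha X^\beta$, $\Pi^{\alphab\betab} = 2X^{\alphab}X^{\betab}$ gives $\Pi(N_\pm)^1 = 2(X^1)^2X^1 + 2X^1X^2X^2 = 2X^1\paraa{(X^1)^2 + (X^2)^2}$, which collapses to $X^1$ by \eqref{eq:sumSqHalf}; the remaining three components are handled identically, the relation $(X^3)^2 + (X^4)^2 = \tfrac12\mid$ being used for the barred block. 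Hence $\Pi(N_\pm) = N_\pm$, so $N_\pm \in \N\A_\theta$.

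For linear independence I would suppose $N_+a + N_-b = 0$ for $a,b\in\A_\theta$ and read off the four components: $X^1(a+b) = X^2(a+b) = 0$ together with $X^3(a-b) = X^4(a-b) = 0$. Multiplying the first pair on the left by $X^1$ and $X^2$ respectively and summing, then invoking \eqref{eq:sumSqHalf}, yields $\tfrac12(a+b) = 0$; the barred pair gives $\tfrac12(a-b) = 0$ in the same way. Thus $a+b = a-b = 0$, so $a = b = 0$.

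The spanning step is where the structure of $\Pi$ does the real work, and it is the part I expect to require the most care. Given $Y \in \N\A_\theta$, choose $U\in\A_\theta^4$ with $Y = \Pi(U)$. Because $\Pi$ is block-diagonal, each component factors through a single algebra element: setting $c = 2\paraa{X^1U^1 + X^2U^2}$ and $d = 2\paraa{X^3U^3 + X^4U^4}$, one finds $\Pi(U) = e_1X^1c + e_2X^2c + e_3X^3d + e_4X^4d$. Comparing with $N_+a + N_-b = e_1X^1(a+b) + e_2X^2(a+b) + e_3X^3(a-b) + e_4X^4(a-b)$, it suffices to solve $a+b = c$ and $a-b = d$, which is possible over $\complex$ via $a = \tfrac12(c+d)$, $b = \tfrac12(c-d)$. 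Hence $Y = N_+a + N_-b$, so $N_+$ and $N_-$ span $\N\A_\theta$, and the module is free of rank $2$. The one point to watch throughout is that every factorisation pulls the relevant $X^i$ out on the correct (left) side, consistent with the right-module action $\D(U) = e_i{\D^i}_jU^j$.
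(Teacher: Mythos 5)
Your proposal is correct and follows essentially the same route as the paper: membership via \eqref{eq:sumSqHalf}, spanning by factoring the block-diagonal $\Pi$ through $c=2(X^1U^1+X^2U^2)$ and $d=2(X^3U^3+X^4U^4)$, and independence by left-multiplying by $X^\alpha$ (resp.\ $X^{\alphab}$) and summing. The only cosmetic difference is that the paper works with the fixed-point equation $\Pi(N)=N$ and the intermediate form $e_1X^1a+e_2X^2a+e_3X^3b+e_4X^4b$ rather than writing $Y=\Pi(U)$ and solving $a=\tfrac12(c+d)$, $b=\tfrac12(c-d)$ explicitly, which is an equivalent formulation since $\Pi$ is idempotent.
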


\begin{proof}
  It is easy to check that $\Pi(N_+)=N_+$ and $\Pi(N_-)=N_-$ which
  shows that they are indeed elements of $\N\A_\theta$. Thus, every
  element of the form
  \begin{align}
    N = e_1X^1a + e_2X^2a + e_3X^3b + e_4X^4b\label{eq:NXpXm}
  \end{align}
  is an element of $\N\A_\theta$.  Now, let $N=e_iN^i\in\A_\theta^4$
  such that $\Pi(N)=N$, which is equivalent to
  \begin{align*}
    &\Pi^{\alpha\beta}N^\beta = N^\alpha\equivalent
    2X^\alpha X^\beta N^\beta = N^\alpha\\
    &\Pi^{\alphab\betab}N^{\betab} = N^{\alphab}\equivalent
    2X^{\alphab}X^{\betab}N^{\betab} = N^{\alphab}.
  \end{align*}
  This immediately implies that $N$ can be written in the form
  \eqref{eq:NXpXm}. Thus, the elements $N_+$ and $N_-$ generate
  $\N\A_\theta$. Next, assume that
  \begin{align*}
    N = e_1X^1a + e_2X^2a + e_3X^3b + e_4X^4b=0,
  \end{align*}
  which is equivalent to $X^\alpha a = 0$ and
  $X^{\alphab}b=0$. Multiplying by $X^\alpha$ and $X^{\alphab}$,
  respectively, and summing over the index yields $a=b=0$. Hence,
  $N_+$ and $N_-$ is a basis for the module $\N\A_\theta$. 
\end{proof}

\noindent Finally, we note that the set $\{E_1,E_2,N_+,N_-\}$ is a set of mutually orthogonal
elements with respect to the metric $\angles{\cdot,\cdot}$.

\section*{Acknowledgment}

\noindent I would like to thank J. Choe and J. Hoppe for discussions,
and the Korea Institute for Advanced Study and Sogang University for
hospitality and financial support.


\bibliographystyle{alpha}
\bibliography{fuzzytsp}  

\end{document}